\documentclass{article}
\usepackage{amsmath,amssymb,amsthm,stmaryrd,makeidx}
\usepackage[all]{xy}

\DeclareMathOperator{\id}{id}

\DeclareMathOperator{\hmm}{Hom}

\DeclareMathOperator{\ph}{Ph}

\DeclareMathOperator{\pts}{Pts}

\newcommand{\grmcat}[1]

\input xypic

\newtheorem{proposition}{Proposition}
\newtheorem{theorem}{Theorem}
\newtheorem{lemma}{Lemma}

\newtheorem{definition}{Definition}

\newtheorem{example}{Example}

\makeindex

\begin{document}
\author{Arvid Siqveland}
\title{Connections on Associative Varieties}

\maketitle

\begin{abstract} We have embedded the category of smooth real $n$-manifolds faithfully into the category of associative varieties, \cite{S2603}. Many constructions on smooth manifolds can then be generalized to associative varieties. Here we will generalize the well known interpretation of connections on manifolds to connections on associative varieties. This eventually proves the existence of geodesics in an associative variety.
\end{abstract}

\section{Introduction}
In the book \cite{Laudal21}, O. A. Laudal propose a model for the universe as an associative real variety with a noncommutative tangent variety. In the book \cite{S23}, the author gives the definition of associative varieties in general. This is made precise in the preprint \cite{S2603} where we define real associative varieties with their tangent varieties, and define a Riemannian geometry on these tangent varieties. With the concept of connections and geodesics which we will define here, we can go back to Laudal's book \cite{Laudal21}, Mathematical Models in Science, and use the geodesics in a chosen Riemannian metric to define distance, which implies time. The geometrical interpretation follows the book, Introduction to Riemannian Manifolds, by John M. Lee, \cite{Lee18}.

We define schemes categorically as the gluing of affine varieties, \cite{S2603}, obtaining the advantage with real associative varieties, that we can work locally.
We use the notation $\mathbb R\langle n\rangle=\mathbb R\langle x_1,\dots, x_n\rangle$ for the free noncommutative polynomial algebra in $n$ variables, and $\mathbb R[n]=\mathbb R[x_1,\dots,x_n],$ when it is commutative. We recall from \cite{S2603} that for an $\mathbb R$-algebra $A,$ the set $\pts_{\mathbb R}A=\hmm_{\mathbb R}(A,\mathbb R),$ which comes with a structure of associative variety, and that a vector bundle $$\pi:E\rightarrow X$$ of rank $k$ is defined locally as an $\mathbb R$-algebra homomorphism $$\phi:A\rightarrow (A\otimes_{\mathbb R}\mathbb R\langle x_1,\dots,x_m\rangle)/I=A\langle m\rangle/I$$ where $I$ is a two-sided ideal, and such that for each point $p:A\rightarrow\mathbb R$ we have $\mathbb R\otimes_A A\langle m\rangle/I\simeq\mathbb R\langle k\rangle.$ A vector field in the vector bundle $$\pi:E=\pts_{\mathbb R}(A\langle m\rangle/I)\rightarrow X=\pts_{\mathbb R}A$$ is a section $\sigma^\ast:X\rightarrow E,$ i.e. such that $\pi\circ\sigma^\ast=\id,$ corresponding to a section $\sigma:A\langle m\rangle/I\rightarrow A$ such that $\phi\circ\sigma=\id.$ Here $\phi:A\rightarrow A\langle m\rangle/I$ is the canonical homomorphism. We denote the set of vector fields in $E$ over $X$ by $\Gamma(E).$

\begin{lemma} Let $E\rightarrow X$ be a $k$-bundle. Then $\Gamma(E)$ is the set of sections $$\sigma:X\rightarrow\mathbb R^n,$$ and has as such a structure of a real $k$-dimensional vector space, defined by point-wise addition and scalar multiplication.
\end{lemma}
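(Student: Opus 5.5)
The plan is to work locally, as the paper allows, and reduce everything to the fibre description of a $k$-bundle. I would take $X=\pts_{\mathbb R}A$ and $E=\pts_{\mathbb R}(A\langle m\rangle/I)$, with canonical homomorphism $\phi:A\rightarrow A\langle m\rangle/I$. The first step is to identify the fibre of $\pi$ over a point $p:A\rightarrow\mathbb R$.

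A point $e:A\langle m\rangle/I\rightarrow\mathbb R$ lies over $p$ exactly when $e\circ\phi=p$. By the universal property of the tensor product, such points are precisely the $\mathbb R$-algebra homomorphisms $\mathbb R\otimes_A A\langle m\rangle/I\rightarrow\mathbb R$. By the defining property of a $k$-bundle this algebra is isomorphic to $\mathbb R\langle k\rangle$. Since $\mathbb R$ is commutative, a homomorphism $\mathbb R\langle k\rangle\rightarrow\mathbb R$ is determined freely by the images of $x_1,\dots,x_k$. Hence
$$\pi^{-1}(p)=\pts_{\mathbb R}(\mathbb R\otimes_A A\langle m\rangle/I)\simeq\pts_{\mathbb R}\mathbb R\langle k\rangle=\mathbb R^k.$$

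The second step describes a section. A section $\sigma^\ast:X\rightarrow E$ with $\pi\circ\sigma^\ast=\id$ picks, for each $p\in X$, one point $\sigma^\ast(p)\in\pi^{-1}(p)\simeq\mathbb R^k$. Composing with the fibre identifications therefore gives a map $X\rightarrow\mathbb R^k$; this is the map written $\sigma:X\rightarrow\mathbb R^n$ in the statement, with $n=k$ the fibre rank. Conversely, a map $X\rightarrow\mathbb R^k$ gives a set-theoretic section.

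To make this a bijection onto $\Gamma(E)$, which is defined through algebra maps $\sigma:A\langle m\rangle/I\rightarrow A$ with $\sigma\circ\phi=\id$, I would use the freeness of $A\langle m\rangle$ over $A$. Such a $\sigma$ is determined by the images $\sigma(x_i)\in A$, subject to the relations in $I$. Its values at $p$, namely $p(\sigma(x_i))$, are exactly the fibre coordinates. This is the step I expect to be the main obstacle. One has to check that the fibre coordinates are compatible with the chosen isomorphism to $\mathbb R\langle k\rangle$, and that these identifications vary compatibly with $p$. This is what fixes which maps $X\rightarrow\mathbb R^k$ actually arise from algebra sections. I would handle it by choosing the generators of $\mathbb R\langle k\rangle$ as images of fixed elements of $A\langle m\rangle/I$, so that the fibre isomorphisms are induced globally on the affine chart.

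The last step transports the linear structure. For sections $\sigma_1,\sigma_2$ and $\lambda\in\mathbb R$, define
$$(\sigma_1+\sigma_2)(p)=\sigma_1(p)+\sigma_2(p),\qquad (\lambda\sigma_1)(p)=\lambda\sigma_1(p)$$
in $\mathbb R^k$. On the algebra side this corresponds to adding the elements $\sigma_i(x_j)$ in $A$, and these sums and scalar multiples again respect the relations of $I$ because the fibre relations are linear in the $x_j$. The vector space axioms then hold pointwise, since they hold in $\mathbb R^k$.

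The "$k$-dimensional" part I would interpret as saying that evaluation at each point $p$ maps $\Gamma(E)$ onto the $k$-dimensional space $\pi^{-1}(p)\simeq\mathbb R^k$. I would prove that evaluation is surjective by taking constant coordinate sections, that is, $\sigma(x_j)\in\mathbb R\subset A$. Without further restriction, $\Gamma(E)$ itself is larger than $k$-dimensional. I would state this clarification explicitly in the proof.
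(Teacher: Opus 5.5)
Your first two steps are exactly the paper's proof. That proof consists only of the observation that $p\circ\sigma$ is an algebra map $\mathbb R\langle k\rangle\cong\mathbb R\otimes_A A\langle m\rangle/I\rightarrow\mathbb R$, hence a point of $\mathbb R^k$, after which the lemma is declared proved. You are right that ``$k$-dimensional'' can only be a fibrewise statement, and the paper's argument yields nothing more. For the trivial bundle $A\langle 1\rangle$ over $\pts_{\mathbb R}\mathbb R[t]$, a section is given by an arbitrary $\sigma(x_1)\in\mathbb R[t]$, so $\Gamma(E)\cong\mathbb R[t]$. You also correctly write the section condition as $\sigma\circ\phi=\id$.

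The paper simply asserts the pointwise vector space structure. Where you try to justify it, the argument does not hold as written.

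First, the relations in $I$ need not be linear. The ideal $I=(x_2-x_1^2)\subset A\langle 2\rangle$ defines a rank-one bundle in the paper's sense, since every fibre is $\mathbb R\langle x_1,x_2\rangle/(x_2-x_1^2)\cong\mathbb R\langle 1\rangle$. Yet for sections with $\sigma(x_1)=a$ and $\tau(x_1)=b$, the summed values $(a+b,a^2+b^2)$ violate the relation.

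More basically, the definition only requires each fibre to be abstractly isomorphic to $\mathbb R\langle k\rangle$. Automorphisms such as $x_1\mapsto x_1+c$, or for $k\geq 2$ $x_1\mapsto x_1+x_2^2$, act non-linearly on $\pts_{\mathbb R}\mathbb R\langle k\rangle=\mathbb R^k$. So the data do not fix a linear structure on $\pi^{-1}(p)$, let alone one varying compatibly with $p$, and pointwise addition is not yet defined.

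Your remedy, taking fibre generators to be images of fixed elements of $A\langle m\rangle/I$, amounts to the extra hypothesis that the chart trivializes $E$, i.e. $A\langle m\rangle/I\cong A\langle k\rangle$ as $A$-algebras. This is not automatic. Take the M\"obius bundle $T_A(P)$ with $P=(u-1,v)$ over $A=\mathbb R[u,v]/(u^2+v^2-1)$, which is a quotient of $A\langle 2\rangle$. A global fibre generator would need a degree-one component in $P$ vanishing at no real point, and no such element exists. Nevertheless $\Gamma(E)=\hmm_A(P,A)$ is still a vector space. Your surjectivity argument via constant sections relies on the same hypothesis.

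Once the hypothesis is stated, what you claim is correct:
\begin{itemize}
\item $\Gamma(E)\cong A^k$ for commutative $A$ (tuples of central elements in general), with the evident operations;
\item evaluation at $p$ maps onto $\mathbb R^k$;
\item $\sigma\mapsto(p\mapsto p\circ\sigma)$ is injective provided points separate elements of $A$.
\end{itemize}
The last condition can fail: for the trivial line bundle over $A=\mathbb R[\epsilon]/(\epsilon^2)$, $\Gamma(E)\cong A$ is $2$-dimensional over a single point.
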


\begin{proof} For each point $p:A\rightarrow\mathbb R$ in $X=\pts_{\mathbb R}A,$ the section $\sigma: A\langle m\rangle/I\rightarrow A$ defines an $\mathbb R$-algebra homomorphism $$\sigma(p):\mathbb R\langle k\rangle=\mathbb R\otimes_A A\langle m\rangle/I\rightarrow\mathbb R=\mathbb R\otimes_A A.$$ This proves the lemma because the set of algebra homomorphisms $\mathbb R\langle k\rangle\rightarrow\mathbb R$ is in one-to-one correspondence with the points in $\mathbb R^k.$  
\end{proof}

In the remaining sections, following Lee \cite{Lee18}, we use $M$ as the name for a general associative variety over $\mathbb R,$ and reserve $X,Y$ as names for sections of vector bundles.

\section{Lines and Curves in Real Associative Varieties}

A curve in an associative $\mathbb R$-variety corresponds (locally affine) to a homomorphism $$\gamma:A\rightarrow\mathbb R[t].$$ In particular, let $A=\mathbb R[n].$ Then $\pts_{\mathbb R}A=\mathbb R^n$ and a curve in $\mathbb R^n$ is a homomorphism \begin{equation}\label{curveeq}\gamma:A\rightarrow\mathbb R[t]\end{equation} determined by $\gamma(x_i)=\gamma_i(t), 1\leq i\leq n.$ Thus all the geometric interpretations in Lee's book \cite{Lee18} follows immediately. 
In \cite{S2603} we defined the tangent variety $T\mathbb R^n$ of $\mathbb R^n$ as $T\mathbb R^n=\pts_{\mathbb R}(\ph A)$ where $$\ph(A)=\mathbb R[x_1,\dots,x_n,dx_1,\dots,dx_n]/E,$$ and where $E$ is the ideal determined by the property   that $d:A\rightarrow\ph A$ is an $\mathbb R$-derivation. Also, let $\pi:A\rightarrow\ph A$ be the homomorphism given by $\pi(x_i)=x_i,1\leq i\leq n.$
The tangent variety in a point $p=(a_1,\dots,a_n)\in\mathbb R^n$ is then $\pts_{\mathbb R}(\ph A|_p),$ with $$\ph A|_p=\mathbb R[dx_1|_p,\dots,dx_n|_p]=\ph A/(x_1-a_1,\dots,x_n-a_n).$$

For the curve $\gamma$ in (\ref{curveeq}), we have the well defined velocity vector $v(t)$ and the acceleration vector $a(t),$ for every $t,$ computed by differentiating the components: $$ v(t)=\gamma'(t)=\gamma_1'(t)dx_1|_{\gamma(t)}+\cdots+\gamma_n'(t)dx_n|_{\gamma(t)}\in \ph A|_{\gamma(t)},$$
$$a(t)=v'(t)=\gamma_1''(t)dx_1|_{\gamma(t)}+\cdots\gamma_n''(t)dx_n|_{\gamma(t)}\in  \ph A|_{\gamma(t)}.$$ A curve $\gamma(t)$ is a straight line if and only if $a(t)=0.$

\section{Connections}

We turn to the general affine situation, with $A$ an associative finitely generated $\mathbb R$-algebra and $M=\pts_{\mathbb R}A.$ We have defined the tangent variety of $M$ as the $\mathbb R$-variety $TM=\pts_{\mathbb R}(\ph(A)).$

\begin{definition} Let $M$ be an associative $\mathbb R$-variety, and let $p\in U\subseteq M$ with $U=\pts_{\mathbb R} A.$  Then the tangent space $T_pM$ in the point $p\in M$ is defined as $$T_p M=\mathbb R\otimes_A\ph(A)$$ where $\mathbb R$ is an $A$-module by the $\mathbb R$-algebra homomorphism $p:A\rightarrow\mathbb R.$ 
\end{definition}

We give a guiding example of directional derivatives.

\begin{example}\label{dirdereks} Let $A=\mathbb R[n].$ Then $$\ph(A)=\mathbb R[x_1,\dots,x_n,dx_1,\dots,dx_n]/(\{d(fg)-f(dg)-(df)g)\}_{f,g\in\mathbb R[n]}),$$ and for a point $p:A\rightarrow\mathbb R$ we find $T_p\mathbb R^n=\mathbb R[dx_1,\dots,dx_n].$ Together with the derivation $d:A\rightarrow\ph(A),$ we get the map $$d_p=p\otimes d:A\rightarrow\mathbb R\otimes_A\ph(A)$$ such that $$d_p(f)=\frac{df(p)}{dx_1}dx_1+\cdots+\frac{df(p)}{dx_n}dx_n=\nabla f(p).$$ Given a vector $v=(\alpha_1,\dots,\alpha_n)\in T_p\mathbb R^n,$ the Euclidean directional derivative of $f\in\mathbb R[n]$ is defined as the vector $$\overline\nabla_v f(p)=\alpha_1\frac{df(p)}{dx_1}dx_1+\cdots+\alpha_n\frac{df(p)}{dx_n}dx_n\in T_p\mathbb R^n.$$ Considering $f$ as a function $f:\mathbb R^n\rightarrow \mathbb R,$ or replacing $\mathbb R[n]$ with $C^\infty(\mathbb R^n),$ and evaluating $d(f)\in\ph(A)$ in the point $(p,\frac{1}{|v|}v)$ we find  $$d(f)(p,v)=v\cdot\nabla f(p),$$ the ordinary directional derivation of $f$ in the direction $v.$ The normalization of the vector $v$ is the reason for introducing a Riemannian metric on $\ph\mathbb R[n].$
\end{example}

\begin{definition}
For a point $p\in\pts_{\mathbb R} A$ and a vector $v=(\alpha_1,\dots,\alpha_n)\in\pts_{\mathbb R}(T_pM),$ we define the directional derivative of $f\in A$ as $$\overline\nabla_v f(p)=\alpha_1\frac{df(p)}{dx_1}dx_1+\cdots+\alpha_n\frac{df(p)}{dx_n}dx_n\in T_pM.$$
\end{definition}

Given a vector field $Y\in\mathfrak X(\mathbb R^n)=\Gamma(T\mathbb R^n).$ By definition, such a vector field can be seen as a map $\mathbb R^n\rightarrow\mathbb R^n,$ and we can consider its component functions $Y^i:\mathbb R^n\rightarrow\mathbb R.$ Locally, the vector field $Y$ corresponds to a section $\sigma:\ph\mathbb R[n]\rightarrow\mathbb R[n]$ and the component functions corresponds to $$y_i=\sigma(dx_i)\in\mathbb R[n].$$

For a vector $v\in T_p\mathbb R^n$ we can compute the directional derivatives of the component functions as in Example \ref{dirdereks}, and copy Lee's definition of the \emph{Euclidean directional derivative of $Y$ in the direction $v$} by the formula $$\overline\nabla_vY=\overline\nabla_v(y_1)dx_1|_p+\cdots\overline\nabla_v(y_n)dx_1|_p\in\ph\mathbb R^n|_p.$$ If $X$ is another vector field on $\mathbb R^n,$ we can evaluate $\overline\nabla_{X_p}Y$ in every point $p\in\mathbb R^n$  obtaining a new vector field $$\overline\nabla_XY=\overline\nabla_X(y_1)dx_1+\cdots\overline\nabla_X(y_n)dx_1\in\mathfrak X(\mathbb R^n).$$

\begin{definition} For vector fields $X,Y$ on $\pts_{\mathbb R}A=M$ we define the vector field $\overline\nabla_XY$ by $$\overline\nabla_XY=\overline\nabla_X(y_1)dx_1+\cdots\overline\nabla_X(y_n)dx_1\in\mathfrak X(M).$$
\end{definition}

As when we generalized differentiation as the application of a derivation, we generalize directional differentiation as an application of a connection. 

\begin{definition} Let $\pi:E\rightarrow M$ be a vector bundle over an associative variety $M.$ A connection in $E$ is a map $$\nabla:\mathfrak X(M)\times\Gamma(E)\rightarrow\Gamma(E),$$ written $(X,Y)\mapsto\nabla_XY,$ satisfying the following properties for an affine covering $\pi:E=\pts_{\mathbb R} B\rightarrow M=\pts_{\mathbb R}A.$
\begin{itemize}
\item[(i)] $\nabla_XY$ is linear over $A$ in $X$: for $f_1,f_2\in A$ and $X_1,X_2\in\mathfrak X(M),$ $$\nabla_{f_1X_1+f_2X_2}Y=f_1\nabla_{X_1}Y+f_2\nabla_{X_2}Y.$$
\item[(ii)] $\nabla_XY$ is $\mathbb R$-linear in $Y$: for $a_1,a_2\in\mathbb R$ and $Y_1,Y_2\in\Gamma(E),$ $$\nabla_X(a_1Y_1+a_2Y_2)=a_1\nabla_XY_1+a_2\nabla_XY_2.$$
\item[(iii)] $\nabla$ satisfies the following product rule: For $f\in A,$ $$\nabla_X(fY)=f\nabla_XY+(Xf)Y.$$
\end{itemize}
\end{definition}

\begin{proposition} Any vector bundle $\pi:E\rightarrow M$ over an associative variety $M$ has a connection $$\nabla:\mathfrak X(M)\times\Gamma(E)\rightarrow\Gamma(E).$$
\end{proposition}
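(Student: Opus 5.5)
The strategy mirrors the classical manifold argument in Lee: first build a connection on a single affine chart where the bundle is trivialised, then glue. The local model is the Euclidean directional derivative $\overline\nabla_XY$ defined just before the statement. Because associative varieties are obtained by gluing affine varieties, it suffices to produce a connection on each affine piece $U=\pts_{\mathbb R}A$ over which $E$ is given by $\phi:A\rightarrow A\langle m\rangle/I$, and then combine these.

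\textbf{Step 1: a local frame.} Over $U$, the Lemma identifies $\Gamma(E|_U)$ with sections whose fibre at every point $p$ is $\pts_{\mathbb R}\mathbb R\langle k\rangle\simeq\mathbb R^k$. Fix generators $x_1,\dots,x_k$ of $\mathbb R\langle k\rangle$ coming from the $x_i$ in $A\langle m\rangle/I$. A section $\sigma$ is then determined by its component functions $y_i=\sigma(x_i)\in A$, so $Y=\sum_i y_iE_i$, where $E_i$ denotes the frame.

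\textbf{Step 2: local definition.} Set
$$\nabla_XY=\sum_{i=1}^k (Xy_i)\,E_i,$$
where $Xy_i=\overline\nabla_X(y_i)$ is the directional derivative of $y_i\in A$ along $X$. Here $X$ is viewed as a section $\tau:\ph(A)\rightarrow A$, and $Xf=\tau(df)$ for the derivation $d:A\rightarrow\ph(A)$.

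\textbf{Step 3: checking the axioms.}
\begin{itemize}
\item[(ii)] $\mathbb R$-linearity in $Y$ holds because $d$ and $\tau$ are $\mathbb R$-linear.
\item[(iii)] The product rule follows from $d$ being a derivation: $fY$ has components $fy_i$, and $\tau(d(fy_i))=f\,\tau(dy_i)+\tau(df)\,y_i$. Here one must use that $\tau$ is an algebra homomorphism, restricting to the identity on $A$ because $\tau\circ\pi=\id$.
\item[(i)] $A$-linearity in $X$ holds because $X\mapsto Xf$ is defined pointwise by evaluating the linear form $dx_j\mapsto$ components of $X$. Addition and $A$-multiplication of vector fields are pointwise, by the Lemma.
\end{itemize}

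\textbf{Step 4: globalising (the main obstacle).} On manifolds one glues with a partition of unity, using that an $\mathbb R$-affine combination $\sum\psi_\alpha\nabla^\alpha$ with $\sum\psi_\alpha=1$ is again a connection; the $(Xf)Y$ term survives exactly because $\sum\psi_\alpha=1$. For polynomial or associative $A$, bump functions are unavailable in general. I would therefore first try to argue via the faithful embedding of \cite{S2603}: the covering is chosen so that the transition data allows a partition of unity, or one works with $C^\infty$-type coordinate algebras, as hinted in Example~\ref{dirdereks}. Failing that, I would interpret the definition of connection, as the paper does, as being required only "for an affine covering". The statement then reduces to Steps 1--3 on each chart, and the local existence is the full content.

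A secondary difficulty arises in the noncommutative case: the product rule in (iii) needs $f$ placed consistently on the left. I would check this against the left-multiplication convention for $fY$.
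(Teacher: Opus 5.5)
Your local construction is the paper's. Its proof defines $\overline\nabla_XY=\overline\nabla_X(y_1)dx_1+\cdots+\overline\nabla_X(y_n)dx_n$ on an affine cover, without checking the axioms as you do. It then settles your Step 4 in one sentence: since $M$ and $E$ arise by natural gluing, the local definition lifts to $M$. You are right that this is the crux, and it is a genuine gap that neither that sentence nor your remedies close. The componentwise formula depends on the frame. If $E'_j=\sum_i g_{ij}E_i$ on an overlap and $Y=\sum_j y'_jE'_j$, then $\nabla_XY-\nabla'_XY=\sum_{i,j}(Xg_{ij})\,y'_j\,E_i$, so two such local connections agree only where the transition functions are constant. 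With polynomial-type coordinate algebras there need not be any compatible choice at all. Glue $\mathbb P^1$ from $\pts_{\mathbb R}\mathbb R[t]$ and $\pts_{\mathbb R}\mathbb R[s]$ along $\pts_{\mathbb R}\mathbb R[t,t^{-1}]$ with $s=t^{-1}$, and take the line bundle with frames $e_0,e_1$ satisfying $e_1=te_0$. Any local connections satisfy $\nabla_{\partial_t}e_0=a(t)e_0$ and $\nabla_{\partial_s}e_1=b(s)e_1$ with $a,b$ polynomials. Since $\partial_s=-t^2\partial_t$ on the overlap, agreement forces $b(t^{-1})=-t-t^2a(t)$. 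This is impossible: the left side has no positive powers of $t$, while the right side has $t$-coefficient $-1$. Partitions of unity repair the argument only after passing to $C^\infty$ coordinate algebras, which gives Lee's manifold theorem, not the associative one. Your last fallback, requiring the axioms chart by chart with no compatibility on overlaps, is in effect all the paper's proof establishes.

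Even that local statement needs more than your Step 1. The paper presents $E$ over $\pts_{\mathbb R}A$ by $m$ generators modulo $I$ with fibres of rank $k$, and nothing makes $k$ of the $x_i$ a frame on the whole chart. Take $TS^2$, with $A=\mathbb R[x,y,z]/(x^2+y^2+z^2-1)$ and $\ph A=A[dx,dy,dz]/(x\,dx+y\,dy+z\,dz)$. By the hairy ball theorem there is no frame at all. Moreover, the componentwise formula in the redundant generators $dx,dy,dz$ does not even return a tangent field. Applying $X$ to $xy_1+yy_2+zy_3=0$ gives $x\,Xy_1+y\,Xy_2+z\,Xy_3=-(X^1y_1+X^2y_2+X^3y_3)$, which equals $-(x^2+y^2)$ for $X=Y=(-y,x,0)$. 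You must do one of two things. Either shrink to trivialising opens, which returns you to the gluing problem. Or, when $E$ is cut out of $A^m$ by an idempotent matrix $e$, use the frame-free formula $\nabla_XY=e\,(Xy_1,\dots,Xy_m)^T$; for $TS^2$ take $e=1-\mathbf x\mathbf x^T$ with $\mathbf x=(x,y,z)^T$.
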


\begin{proof} Define a connection $\nabla:\mathfrak X(M)\times\Gamma(E)\rightarrow\Gamma(E)$ on the open affine cover $\pi:E=\pts_{\mathbb R}B\rightarrow M=\pts_{\mathbb R}A$ by $$\overline\nabla_XY=\overline\nabla_X(y_1)dx_1+\cdots+\overline\nabla_X(y_n)dx_n\in\mathfrak X(M).$$ As $M$ and $E$ are associative varieties, they are formed by natural gluing of their local affine varieties, and as such, the local definition of the connection lifts to a connection on $M$ in $E.$
\end{proof}

We continue with the affine associative variety $M=\pts_{\mathbb R}A,$ $A$ a finitely generated $\mathbb R$-algebra. Given a curve $\gamma:\mathbb R=\pts_{\mathbb R}\mathbb R[t]\rightarrow M.$ A vector field along $\gamma$ is a continuous map $V:\mathbb R\rightarrow TM$ such that for every point $t\in\mathbb R,$ $V(t)\in T_{\gamma(t)}M.$ We denote the set of all smooth vector fields along $\gamma$ by $\mathfrak X(\gamma).$

\begin{lemma} $\mathfrak X(\gamma)$ is a real vector space which is also an $\mathbb R[t]$-module.
\end{lemma}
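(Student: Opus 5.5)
The plan is to define the operations pointwise, fibre by fibre, as in the first Lemma (where $\Gamma(E)$ acquired its vector space structure from pointwise operations). Fix $t\in\mathbb R$ and write $p=\gamma(t)$, i.e. $p$ is the composite $A\xrightarrow{\gamma}\mathbb R[t]\xrightarrow{\mathrm{ev}_t}\mathbb R$. The tangent space $T_pM=\mathbb R\otimes_A\ph(A)$ is a real vector space, with $\mathbb R$ acting through the left factor. For $V,W\in\mathfrak X(\gamma)$ and $a,b\in\mathbb R$ we set
$$(aV+bW)(t)=aV(t)+bW(t)\in T_{\gamma(t)}M .$$
This lies in the correct fibre by construction. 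Continuity and smoothness hold because, locally, $V$ and $W$ are given by their coefficient functions with respect to the generators $dx_i|_{\gamma(t)}$, as in the formulas for $v(t)$ and $a(t)$ in the section on curves. Under this description the operation is just addition and scaling of the coefficient functions, and smooth functions of $t$ are closed under these. The vector space axioms then hold fibrewise, since each $T_{\gamma(t)}M$ is a vector space. The zero field $t\mapsto 0\in T_{\gamma(t)}M$ is the zero element, and it is smooth because its coefficients are constant.

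For the $\mathbb R[t]$-module structure, take $f(t)\in\mathbb R[t]$ and define
$$(fV)(t)=f(t)\,V(t).$$
Here $f(t)\in\mathbb R$ is the value of $f$ under evaluation at $t$. This again lands in $T_{\gamma(t)}M$. Its coefficient functions are $f$ times the coefficients of $V$, so they remain smooth because polynomials are smooth. The module axioms $(fg)V=f(gV)$, $(f+g)V=fV+gV$, $f(V+W)=fV+fW$ and $1V=V$ follow pointwise from the vector space axioms in each fibre, together with the fact that evaluation at $t$ is a ring homomorphism $\mathbb R[t]\to\mathbb R$. The real structure is the restriction of the module structure along the inclusion $\mathbb R\subset\mathbb R[t]$ of constants, so the two structures are compatible.

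The main obstacle is making ``smooth'' precise in this associative setting, since the paper only says that $V$ is continuous into $TM$. I will first reduce to a local chart by a choice of generators $x_1,\dots,x_n$ of $A$. Each $T_{\gamma(t)}M$ is then a quotient of $\mathbb R\langle dx_1,\dots,dx_n\rangle$ by a fibre of the defining ideal, and a field along $\gamma$ is written with coefficient functions in $t$. Smoothness will be understood as smoothness of these coefficients, and I will check that this is independent of the representative modulo the ideal. That check needs only that the relations are $\mathbb R$-linear in each fibre, so sums and scalar (or polynomial) multiples of representatives are again representatives. Once this is settled, the closure arguments above are immediate.
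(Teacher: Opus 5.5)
Your argument is correct once smoothness is made precise, but it is not the paper's route. The paper proves the lemma in one line by identifying $\mathfrak X(\gamma)=\hmm_{\mathbb R}(\ph A,\mathbb R[t])$. There a field along $\gamma$ is a single algebra map into $\mathbb R[t]$, as in diagram (\ref{liftdiag}), and both structures are inherited from the codomain via $(fV)(dx_i)=f\cdot V(dx_i)$. Evaluated at $t$, this is exactly your $(fV)(t)=f(t)V(t)$, so the two module structures coincide.

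The algebraic route is brief and fits the later extendibility diagram. However, it tacitly replaces ``smooth'' by ``polynomial'' coefficient functions. Also, the Hom-set is closed under addition only if you read it as homomorphisms restricting to $\gamma$ on $A$, added on the values $V(dx_i)$ alone (equivalently, $\mathbb R$-derivations $A\to\mathbb R[t]$ along $\gamma$), because a pointwise sum of algebra homomorphisms is not one.

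Your fibrewise route follows the definition as actually stated, and it even yields a module over all smooth functions of $t$. The cost is that you must supply the notion of smoothness the paper never gives. Define it existentially: $V$ is smooth if the values $V(t)$ admit representatives in $\mathbb R\langle dx_1,\dots,dx_n\rangle$ whose coefficients are smooth in $t$. The independence of representative that you propose to check actually fails whenever some fibre $T_{\gamma(t_0)}M$ has a nonzero relation, since you can change the representative at $t_0$ alone. The fact you cite, that sums and polynomial multiples of representatives are again representatives, is exactly what shows the existential definition is closed under your operations.
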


\begin{proof} $\mathfrak X(\gamma)=\hmm_{\mathbb R}(\ph A,\mathbb R[t]).$ This proves the claim.
\end{proof}

A curve $\gamma$ in $M$ corresponds to a homomorphism $\gamma:\ph A\rightarrow\mathbb R[t]$ such that when the generators of $A$ is $\{x_i\}_{i=1}^n,$ we can let $\gamma_i(t)=\gamma(x_i).$ Then as the generators of $T_{\gamma(t)}M$ is $\{dx_i\}_{i=1}^n,$ the expression $\gamma'(t)=\sum_{i=1}^n\gamma_i'(t)dx_i$ is well defined. 

\begin{definition} Let $\gamma:\mathbb R\rightarrow M$ be a curve in the associative variety $M.$ Then the velocity of $\gamma$ is defined as $\gamma'(t)\in T_{\gamma(t)}M.$
\end{definition}

The idea with all this, is that a curve is a straight line if its velocity vector does not change with $t.$

A vector field $V:\mathbb R\rightarrow TM$ along a curve $\gamma:\mathbb R\rightarrow M$ is extendible if there exists a vector field $\tilde V\in\mathfrak X(M)$ such that $V(t)=\tilde V(\gamma(t)).$ Writing this on the algebra level, this means that we have a commutative diagram \begin{equation}\label{liftdiag}\xymatrix{\ph A\ar[r]^V\ar[d]_{\tilde V}&\mathbb R[t]\\A.\ar[ur]_\gamma&}\end{equation}

This proves the following theorem in Lee, \cite{Lee18}.

\begin{theorem} Let $M$ be an associative variety with a connection  $$\nabla:\mathfrak X(M)\times\mathfrak X(M)\rightarrow\mathfrak X(M).$$ For each curve $\gamma:\pts_{\mathbb R}\rightarrow M,$ the connection $\nabla$ determines a unique operator $$D_t:\mathfrak X(\gamma)\rightarrow\mathfrak X(\gamma),$$ satisfying the following properties:
\begin{itemize} \item[(i)] Linearity over $\mathbb R$:
$$D_t(aV+bW)=aD_tV+bD_tW,\ a,b\in\mathbb R.$$
\item[(ii)] Product rule:
$$D_t(fV)=f'V+fD_tV,\ f\in\mathbb R[t].$$
\item[(iii)] If $V\in\mathfrak X(\gamma)$ is extendible, then for every extension $\tilde V$ of $V,$ $$D_tV(t)=\nabla_{\gamma'(t)}\tilde V.$$
\end{itemize}
\end{theorem}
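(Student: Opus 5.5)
The plan follows Lee's proof: first show uniqueness, then define $D_t$ by the formula uniqueness forces, and finally check that this definition does not depend on choices. Throughout I work on an affine chart $M=\pts_{\mathbb R}A$, with $A$ generated by $x_1,\dots,x_n$. The global statement then follows by gluing, exactly as in the proof of the Proposition that connections exist, provided the local operators agree on overlaps. Uniqueness gives that agreement automatically.

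\textbf{Uniqueness.} By the Lemma, $\mathfrak X(\gamma)=\hmm_{\mathbb R}(\ph A,\mathbb R[t])$. Hence a vector field $V$ along $\gamma$ is determined by its component functions $v_i(t)=V(dx_i)\in\mathbb R[t]$, and we can write $V=\sum_i v_i(t)\,\partial_i|_{\gamma(t)}$. Here $\partial_i$ is the coordinate vector field dual to $dx_i$, given by the section $\sigma_i:\ph A\to A$ with $\sigma_i(dx_j)=\delta_{ij}$. Each $\partial_i|_{\gamma(t)}$ is extendible, since diagram (\ref{liftdiag}) commutes with $\tilde V=\sigma_i$. Now suppose $D_t$ has properties (i)--(iii). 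Then (i) and (ii) give
\[D_tV=\sum_i v_i'(t)\,\partial_i|_{\gamma(t)}+\sum_i v_i(t)\,D_t\partial_i|_{\gamma(t)},\]
and (iii) gives $D_t\partial_i|_{\gamma(t)}=\nabla_{\gamma'(t)}\partial_i$. So $D_t$ is forced to equal this expression. Writing $\gamma'(t)=\sum_j\gamma_j'(t)\partial_j$ and using $A$-linearity (i) of $\nabla$ in the lower slot, the expression becomes
\[D_tV=\sum_i v_i'\partial_i+\sum_{i,j}v_i\gamma_j'\,\nabla_{\partial_j}\partial_i\circ\gamma .\]
This depends only on $\nabla$ and $\gamma$.

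\textbf{Existence.} I take the displayed formula as the definition of $D_t$ on the chart. Properties (i) and (ii) are then routine: (ii) is the Leibniz rule $(fv_i)'=f'v_i+fv_i'$ in $\mathbb R[t]$. For (iii), let $\tilde V=\sum_i\tilde v_i\partial_i$ with $\tilde v_i\in A$ and $\gamma(\tilde v_i)=v_i$. By the product rule (iii) for $\nabla$,
\[\nabla_{\gamma'}\tilde V=\sum_i(\gamma'\tilde v_i)\partial_i+\sum_i\tilde v_i\nabla_{\gamma'}\partial_i .\]
It remains to identify $(\gamma'\tilde v_i)(t)$ with $v_i'(t)$. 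This is the chain rule $\frac{d}{dt}\gamma(f)=\sum_j\gamma_j'(t)\,\gamma\!\left(\frac{\partial f}{\partial x_j}\right)$ for $f\in A$.

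\textbf{Main obstacle.} The delicate step is justifying this chain rule in the associative setting, and with it the fact that $\nabla_{\gamma'(t)}\tilde V$ depends only on the values of $\tilde V$ along $\gamma$. I would prove the chain rule by noting that $f\mapsto \frac{d}{dt}\gamma(f)$ is an $\mathbb R$-derivation $A\to\mathbb R[t]$, with $\mathbb R[t]$ made an $A$-bimodule via $\gamma$. By the universal property of $d:A\to\ph A$, this derivation factors through $\ph A$. Evaluating it on the generators $dx_j$ gives $\gamma_j'$, and the formula follows because $\mathbb R[t]$ is commutative, so the relations in $E$ reduce to the ordinary Leibniz rule. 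Locality in $\tilde V$ then follows from the same computation, since the result involves only $v_i=\gamma(\tilde v_i)$. Once this is in place, compatibility on overlaps follows from uniqueness, and the local operators glue.
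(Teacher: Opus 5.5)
Your proposal is Lee's proof carried over to an affine chart. That is all the paper itself offers: its argument rewrites extendibility as the commutative diagram (\ref{liftdiag}) and then appeals to Lee, so it has no ingredient you are missing. Your derivation of the chain rule from the universal property of $d:A\to\ph A$ is sound: the pair $(\gamma,\frac{d}{dt}\circ\gamma)$ induces $\gamma':\ph A\to\mathbb R[t]$ with $dx_j\mapsto\gamma_j'$.

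The gap is that everything rests on the coordinate fields $\partial_i$, that is, on homomorphisms $\sigma_i:\ph A\to A$ over $\id_A$ with $\sigma_i(dx_j)=\delta_{ij}$. These do not exist for a general finitely generated $A$. By the same universal property, such a section is just an $\mathbb R$-derivation $A\to A$, so $\sigma_i$ exists only if $\partial/\partial x_i$ preserves the ideal of relations of $A$. For the circle $A=\mathbb R[x,y]/(x^2+y^2-1)$, the element $d(x^2+y^2-1)$ is zero in $\ph A$, but $\sigma_1$ would send it to $2x\neq 0$. The elements of $\mathfrak X(\gamma)$ must satisfy $\gamma(x)v_1+\gamma(y)v_2=0$, so the $\partial_i|_{\gamma(t)}$ need not even lie in $\mathfrak X(\gamma)$. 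Hence the terms $D_t\partial_i|_{\gamma(t)}$ in your uniqueness step are undefined, and the symbols $\nabla_{\partial_j}\partial_i$ in your defining formula have no meaning.

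A second failure occurs for noncommutative $A$. Even for $A=\mathbb R\langle x,y\rangle$, where the $\partial_i$ exist, a vector field is not $\sum_i\tilde v_i\partial_i$. The field with $\tilde v_1=y$, $\tilde v_2=0$ (the derivation $x\mapsto y$, $y\mapsto 0$) sends $x^2\mapsto yx+xy$. By contrast, $y\partial_x$ sends $x^2\mapsto 2yx$ and is not a derivation. So your expansion of $\nabla_{\gamma'}\tilde V$ via the product rule breaks down. The paper's axioms (i) and (iii) for $\nabla$ share this defect, since $fX$ is not a vector field for non-central $f$.

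What you have is a correct proof when $A=\mathbb R[n]$, or more generally when $A$ is commutative and $dx_1,\dots,dx_n$ form an $A$-basis of the differentials. That is exactly Lee's coordinate-chart setting. To reach the generality of the statement you would need a substitute for the frame, in two parts:
\begin{itemize}
\item[(a)] fields $X_1,\dots,X_r\in\mathfrak X(M)$ whose restrictions $X_k|_\gamma$ generate $\mathfrak X(\gamma)$ over $\mathbb R[t]$, which yields uniqueness through $D_t\bigl(\sum_kf_kX_k|_\gamma\bigr)=\sum_k\bigl(f_k'X_k|_\gamma+f_k\nabla_{\gamma'}X_k\bigr)$;
\item[(b)] a proof that the right-hand side vanishes whenever $\sum_kf_kX_k|_\gamma=0$, so that this formula is a well-defined definition of $D_t$.
\end{itemize}
Neither your proposal nor the paper provides this.
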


\begin{definition} Let $M$ be an associative variety with a connection $\nabla$ in $TM.$ Let $\gamma:\pts_{\mathbb R}\mathbb R[t]\rightarrow M$ be a curve.
\begin{itemize}
\item[(ii)] The acceleration of $\gamma$ is the vector field $D_t\gamma'$ along $\gamma.$
\item[(ii)] We call $\gamma$ a geodesic with respect to $\nabla$ if $D_t\gamma'\equiv 0.$
\end{itemize}
\end{definition}

From the commutative diagram \ref{liftdiag}, we get our final result. This is Theorem 4.27 in Lee \cite{Lee18}.

\begin{theorem} Let $M$ be a smooth manifold and $\nabla$ a connection on $TM.$ For every $p\in M, w\in T_pM, t_0\in\mathbb R,$ there is a unique geodesic $\gamma:\mathbb R\rightarrow M$ satisfying $\gamma(t_0)=p$ and $\gamma'(t_0)=w.$
\end{theorem}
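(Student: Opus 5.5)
The plan is to stay inside the paper's algebraic framework: a curve is a homomorphism $\gamma:A\rightarrow\mathbb R[t]$, determined by the polynomials $\gamma_i(t)=\gamma(x_i)$. Such a curve is automatically defined on all of $\mathbb R=\pts_{\mathbb R}\mathbb R[t]$. The statement then reduces to two things. First, show that the geodesic condition $D_t\gamma'\equiv 0$ is a system of equations on the $\gamma_i$. Second, show that this system has exactly one polynomial solution with $\gamma(t_0)=p$ and $\gamma'(t_0)=w$. Since the statement concerns a smooth manifold, I will work in an affine chart $\pts_{\mathbb R}A$ containing $p$, with $A$ generated by $x_1,\dots,x_n$. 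There $T_pM$ has basis $dx_1|_p,\dots,dx_n|_p$, and we write $w=\sum w_i\,dx_i|_p$.

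\emph{Step 1: compute $D_t\gamma'$ in coordinates.} The velocity is $\gamma'=\sum_i\gamma_i'(t)\,dx_i$. Here each $dx_i$ along $\gamma$ is extendible in the sense of diagram (\ref{liftdiag}), with extension the coordinate field $dx_i$ itself. By the product rule (ii) and property (iii) of the preceding theorem on $D_t$,
$$D_t\gamma'=\sum_k\Big(\gamma_k''(t)+\sum_{i,j}\gamma_i'(t)\gamma_j'(t)\,\Gamma^k_{ij}(\gamma(t))\Big)dx_k .$$
The coefficients are defined by $\nabla_{dx_i}dx_j=\sum_k\Gamma^k_{ij}dx_k$, and $\Gamma^k_{ij}(\gamma(t))=\gamma(\Gamma^k_{ij})\in\mathbb R[t]$. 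So the geodesic condition is the system
$$\gamma_k''+\sum_{i,j}\Gamma^k_{ij}(\gamma)\,\gamma_i'\gamma_j'=0,\qquad 1\le k\le n,$$
together with the initial conditions $\gamma_k(t_0)=p_k$ and $\gamma_k'(t_0)=w_k$.

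\emph{Step 2: solve the system.} For the connection produced in the Proposition, namely $\overline\nabla_XY=\sum\overline\nabla_X(y_i)\,dx_i$, the coordinate fields have constant components. Hence every $\Gamma^k_{ij}$ vanishes and the system becomes $\gamma_k''=0$. Its polynomial solutions are exactly $\gamma_k(t)=p_k+(t-t_0)w_k$. This gives existence of a curve $\gamma:A\rightarrow\mathbb R[t]$ defined on all of $\mathbb R$. It also gives uniqueness, because a polynomial is determined by its value and first derivative at $t_0$ once its second derivative vanishes. Finally, I would check that this local description is compatible with the gluing used in the Proposition, so the geodesic is well defined on $M$ and does not depend on the chart.

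\emph{Main obstacle.} The hard step is a general connection on $TM$ with nonzero $\Gamma^k_{ij}$. There Step 2 is a genuinely nonlinear ODE. Uniqueness still follows from the standard Picard--Lindel\"of uniqueness, since polynomials are smooth. Existence of a solution on all of $\mathbb R$, and of a polynomial one, is the real issue. For instance, $n=1$ with $\Gamma^1_{11}=1$ gives $\gamma''=-(\gamma')^2$, whose solutions are logarithmic and blow up in finite time. So in this step I would first attempt to show that the connections admissible in the associative setting are, locally, of the form given in the Proposition, so that Step 2 applies. If that fails, I would use (\ref{liftdiag}) to look for $\gamma$ as a formal power series solution. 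I would then try to show, using the recursion coming from the coefficients $\Gamma^k_{ij}$, that the series terminates, and I expect this to be exactly where the argument can break.
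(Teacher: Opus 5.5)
There is a genuine gap. Your Step 1 is the right coordinate computation, but Step 2 only handles the connection of the Proposition on $A=\mathbb R[n]$. Neither repair you suggest for a general connection can work.

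Connections in the paper's sense are not locally of the Proposition's form. On $M=\pts_{\mathbb R}\mathbb R[x]$, write $X=X^1dx$ and $Y=Y^1dx$. Then $\nabla_XY=\overline\nabla_XY+X^1Y^1\,dx$ satisfies (i)--(iii) and has $\Gamma^1_{11}=1$. Its geodesic equation is $\gamma''=-(\gamma')^2$, and no power series can terminate. If $\gamma'\neq0$ is a polynomial of degree $d$, then $(\gamma')^2$ has degree $2d>\deg\gamma''$, so there is no polynomial geodesic with $w\neq 0$.

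Passing to smooth curves does not help. By Picard--Lindel\"of, the maximal solution with $\gamma(t_0)=p$ and $\gamma'(t_0)=w>0$ is $\gamma(t)=p+\log(1+w(t-t_0))$ on $(t_0-1/w,\infty)$. It tends to $-\infty$ at the left endpoint, so no geodesic defined on all of $\mathbb R$ has this initial data.

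So the example you list as the main obstacle is in fact a counterexample. The statement as written, with $\gamma$ defined on all of $\mathbb R$, is false, and no argument can close Step 2 in general. The paper's own justification, that the result follows from diagram (\ref{liftdiag}), does not supply the missing ingredient either. That diagram only encodes extendibility of a field along $\gamma$. This feeds property (iii) of $D_t$ and hence your Step 1, but says nothing about solving a nonlinear ODE.

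What is true is Lee's Theorem 4.27 as he actually states it, and it is local. There is an open interval $I\ni t_0$ and a geodesic $\gamma:I\rightarrow M$ with the given initial data, and any two such geodesics agree on their common domain. Your Step 1 is the right start for this. Rewrite the geodesic equations as the first-order system $\gamma_k'=v_k$, $v_k'=-\sum_{i,j}\Gamma^k_{ij}(\gamma)v_iv_j$, and apply Picard--Lindel\"of in a chart. Then pass between charts by the usual open--closed argument on the set of times where two geodesics and their velocities agree. Global existence needs an extra hypothesis such as geodesic completeness, and the argument must use smooth rather than polynomial curves.

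Even your flat case is weaker than it looks, for two reasons:
\begin{itemize}
\item The line $p+(t-t_0)w$ defines a homomorphism $A\rightarrow\mathbb R[t]$ only if it respects the relations of $A$. On $\pts_{\mathbb R}\mathbb R[x,y]/(x^2+y^2-1)$ every homomorphism to $\mathbb R[t]$ is constant, since the top coefficients of $x(t)^2+y(t)^2$ cannot cancel.
\item Setting $\Gamma^k_{ij}=0$ in every chart is not chart-independent unless the transition maps are affine, so your final gluing check would also fail.
\end{itemize}
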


%For applications to Riemannian geometry, we are concerned with connections in the tangent bundle, that is $$\nabla:\mathfrak X(M)\times\mathfrak X(M)\rightarrow\mathfrak X(M).$$ On an open affine $M=\pts_{\mathbb R}A$ with $\mathfrak X(M)=\pts_{\mathbb R}(\ph(A))$ where $A$ is generated by $\{x_1,\dots,x_n\}$ the connection is determined by its connection coefficients defined by the relation $$\nabla_{x_i}x_j=\sum_{k=1}^n\Gamma^k_{ij}dx_k.$$


\begin{thebibliography}{99.}






\bibitem{Laudal21} O. A. Laudal, Mathematical models in science, World Scientific Publishing Co. Pte. Ltd., Hackensack, NJ, 2021

\bibitem{Lee18}
John M. Lee,
Introduction to Riemannian Manifolds,
Springer, Graduate Texts in Mathematics,
2018

\bibitem{S23}
Arvid Siqveland, 
Associative Algebraic Geometry,
World Scientific Publishing Co. Pte. Ltd., Hackensack, NJ, 2023
ISBN: 977-1-80061-354-6,
2023

\bibitem{S2603}
Arvid Siqveland,
Riemannian Geometry on Associative Varieties,
arXiv:2604.10462 [math.AG],
2026


\end{thebibliography}
\end{document}